\long\def\unmarkedfootnote#1{{\long\def\@makefntext##1{##1}\footnotetext{#1}}}
\theoremstyle{plain}
\newtheorem{thm}{Theorem}[section]
\newtheorem{lemma}[thm]{Lemma}
\newtoks\prt
\theoremstyle{definition}
\newtheorem{remark}[thm]{Remark}
\newtheorem{definition}[thm]{Definition}
\def\eqn#1$$#2$${\begin{equation}\label#1#2\end{equation}}
\numberwithin{equation}{section}
\def\V{\mathbb V}
\def\epsilon{\varepsilon}
\def\en{\mathbb N}
\def\er{\mathbb R}
\def\H{\mathcal{H}}
\def\L{\mathcal{L}}
\def\loc{\operatorname{loc}}
\def\mir1{\mathcal L_1}
\def\oint{-\hskip -11pt \int}
\def\rn{\mathbb R^n}
\newcommand{\labeltext}[2]{%
	\@bsphack
	\def\@currentlabel{#1}{\label{#2}}%
	\@esphack
}
\newtoks\by
\newtoks\paper
\newtoks\book
\newtoks\jour
\newtoks\yr
\newtoks\pages
\newtoks\vol
\newtoks\publ
\def\ota{{\hbox\vol{???}}}
\def\cLear{\by=\ota\paper=\ota\book=\ota\jour=\ota\yr=\ota
\pages=\ota\vol=\ota\publ=\ota}
\def\endpaper{\the\by, {\the\paper},
\textit{\the\jour} \textbf{\the\vol} (\the\yr), \the\pages.\cLear}
\def\endbook{\the\by, \textit{\the\book}, \the\publ.\cLear}
\def\endprep{\the\by, \textit{\the\paper}, \the\jour.\cLear}
\def\endyearprep{\the\by, \textit{\the\paper}, \the\jour, (\the\yr).\cLear}
\def\name#1#2{#2 #1}
\definecolor{ffqqqq}{rgb}{1.,0.,0.}
\definecolor{qqffqq}{rgb}{0.,1.,0.}
\definecolor{qqqqff}{rgb}{0.,0.,1.}
\definecolor{xfqqff}{rgb}{0.4980392156862745,0.,1.}
\definecolor{ffxfqq}{rgb}{1.,0.4980392156862745,0.}
\title{A note on Sobolev-Lorentz Capacity and Hausdorff measure}
\author[D. Campbell]{Daniel Campbell}
\address{Department of Mathematical Analysis, Charles University, So\-ko\-lovsk\'a 83, 186~00 Prague 8, Czech Republic}
\email{\tt campbell@karlin.mff.cuni.cz}
\keywords{Sobolev-Lorentz spaces, Capacity, Hausdorff measure}
\subjclass[2020]{31C40, 46E35, 46E30}
\thanks{The author was supported by the grant GA\v{C}R P201/24-10505S and Ministry of Education, Youth and Sport of
	the Czech Republic grant number LL2105 CONTACT}
\date{\today}
\begin{document}

\begin{abstract}
    {In this paper, we give an elementary proof that sets of zero $p,1$-Sobolev-Lorentz capacity are $\H^{n-p}$-null sets, independently of nonlinear potential theory. We further show that there exists a set of Sobolev-Lorentz-$(p,1)$ capacity equal to zero with Hausdorff dimension equal $n-p$.}
\end{abstract}

\maketitle

\section{Introduction}
The study of sets of zero capacity is key in understanding possible pathological behavior of Sobolev functions. It is well known that points have zero Sobolev $n$-capacity, a fact that is closely related to the existence of discontinuous functions in $W^{1,n}_{\loc}(\rn)$. One should compare this to the well-known Morrey estimates for $p>n$. The question arises naturally also in lower dimension, for example when we are interested in the set on which boundary values are satisfied in a pointwise sense (see e.g., \cite{MZ} and \cite{CP}).

The case $p=n$ is particularly interesting, since this is the threshold of regularity required to obtain continuity. Although continuity fails for maps in $W^{1,n}_{\loc}(\rn)$, it was proved in \cite{S} that maps whose derivative belongs to the Lorentz space $L^{n,1}_{\loc}(\rn)$ (for the definition see the Preliminaries), are continuous and differentiable almost everywhere. This is closely connected to the fact that $n,1$-Sobolev-Lorentz null capacity sets are empty (see \cite{SC}). In \cite{KKM}, it was proven that the condition of having derivative in $L^{n,1}_{\loc}(\rn)$ is sharp and it also implies that a mapping satisfies the Lusin (N) condition. It is known that this condition fails for mappings only in $W^{1,n}$, we refer the reader to the example in \cite{MM}.

One is naturally led to ask whether there exists a better qualitative estimate for the size of sets of null $p,1$-Sobolev-Lorentz capacity than in the ordinary Sobolev case, also for other $p$ than $n$. The definition of Sobolev-Lorentz spaces $W^1_{p,q}(\Omega)$ can be found at Definition~\ref{SoboLore} and the (quasi)norm $\|f\|_{L^{p,q}(\Omega)}$ is defined in Definition~\ref{PanLorenc}. We define Sobolev-Lorentz capacities as follows.
\begin{definition}
	Let $n\in \en$, $1<p\leq n$,  $1\leq q\leq \infty$ and let $E\subset \rn$. We define the variational $p,q$-Sobolev-Lorentz capacity of $E$ as 
	$$
	\gamma_{p,q}(E):= \inf\Big\{ \|Df\|_{L^{p,q}(\rn)}^p; f\in W^{1}_{p,q}(\rn) \text{ s.t. } \exists G\supset E \text{ open and } f\geq 1 \text{ on } G     \Big\},
	$$
	we define the relative variational $p,q$-Sobolev-Lorentz capacity of $E$ with respect to an open $\Omega\subset \rn$ as
	$$
	\begin{aligned}
		\gamma_{p,q}(E,\Omega):= \inf\Big\{ \|Df\|_{L^{p,q}(\rn)}^p; &f\in W^{1}_{p,q}(\rn) \text{ s.t. } \exists G\supset E \text{ open and } f\geq 1 \text{ on } G,\\
		& f=0 \text{ on } \rn\setminus \Omega     \Big\},
	\end{aligned}
	$$
	and the $p,q$-Sobolev-Lorentz capacity of $E$ as
	$$
		\gamma_{p,q}^+(E):= \inf\Big\{\|f\|_{L^{p,q}}^p +  \|Df\|_{L^{p,q}(\rn)}^p; f\in W^{1}_{p,q}(\rn) \text{ s.t. } \exists G\supset E \text{ open and } f\geq 1 \text{ on } G     \Big\}.
	$$
	Finally we define $\gamma_{p,q}^+(E,\Omega)$ as the obvious combination of the last two definitions.
\end{definition}

In Lemma~\ref{KapNulovost}, we show that if a set $E\subset \rn$ has zero capacity in any of these capacities then it has zero capacity in all of them.

It was proven in \cite{SC}  that $n,1$-Sobolev-Lorentz null-capacity sets are empty (equivalently, have zero $\H^{0}$ measure). An extension of this to higher derivatives is the sharp Morse-Sard theorem in the Sobolev-Lorentz setting of \cite{KKK}. A further result of \cite{KKK} is a smallness result on the $n-p$ Hausdorff content using the $p,1$-Sobolev-Lorentz capacity. We refer the reader to \cite{MPS} for further results and exposition. Let us also refer the reader to the paper in preparation \cite{KaK}, which gives more detail on $\gamma_{p,1}$-null sets. The aim of this paper, stated in Theorem~\ref{main}, is to give an elementary alternative proof of the smallness result which avoids potential theory.

\begin{thm}\label{main}
	Let $p\in(1,n]$ and let $E\subset \rn$ be such that $\gamma_{p,1}(E) = 0$. Then $\H^{n-p}(E) = 0$. 
\end{thm}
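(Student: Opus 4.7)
The plan is to combine a critical-radius Vitali covering of $E$ with a local Poincar\'e--H\"older estimate and a subadditivity property of $\|\cdot\|_{L^{p,1}}^p$ over disjoint sets.

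First, by Lemma~\ref{KapNulovost}, for any $\varepsilon>0$ one can pick $f \in W^{1}_{p,1}(\rn)$ with $\|f\|_{L^{p,1}}^p + \|Df\|_{L^{p,1}}^p < \varepsilon$ and $f \geq 1$ on some open $G \supset E$; truncating by $\max(\min(f,1),0)$ we may assume $0 \leq f \leq 1$ and $f \equiv 1$ on $G$ without enlarging either norm. For each $x \in E$ set $r_x := \inf\{r>0 : \oint_{B(x,r)} f \leq \tfrac12\}$. This is positive (since $x$ is an interior point of $G$), and H\"older's inequality for Lorentz spaces in the form $\oint_{B(x,r)} f \leq C\|f\|_{L^{p,1}} r^{-n/p}$ shows $r_x \leq C\varepsilon^{1/n}$ uniformly. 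By continuity, $\oint_{B(x,r_x)} f = \tfrac12$. Vitali's covering theorem applied to $\{B(x,r_x)\}_{x \in E}$ (with uniformly bounded radii) yields disjoint balls $B_i := B(x_i,r_i)$ such that $E \subset \bigcup_i B(x_i, 5 r_i)$.

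The heart of the argument is the per-ball estimate
\[
r_i^{n-p} \leq C\,\|Df\|_{L^{p,1}(B_i)}^p.
\]
Since $\oint_{B_i} f = \tfrac12$ while $f \equiv 1$ on a neighborhood of $x_i$, a Poincar\'e-type inequality on $B_i$ ought to give $\int_{B_i}|Df| \geq c\, r_i^{n-1}$; combining this with the Lorentz--H\"older estimate $\int_{B_i}|Df| \leq C\|Df\|_{L^{p,1}(B_i)} r_i^{n/p'}$ and the algebraic identity $n-1-n/p' = (n-p)/p$ then yields the claim. I expect this to be the main obstacle. The easy sub-case is $|B_i \cap G| \gtrsim |B_i|$, in which $\oint_{B_i}|f - f_{B_i}| \gtrsim 1$ is immediate from the fact that $f \equiv 1$ on $B_i \cap G$ and $\oint_{B_i} f = \tfrac12$; the opposite case, when $|B_i \cap G|$ is negligible but $f_{B_i} = \tfrac12$, requires either a relative isoperimetric / coarea argument on the level sets of $f$ in $B_i$, or re-defining $r_x$ via the Lebesgue density of the larger open set $\{f > \tfrac12\}$ together with the auxiliary truncation $\tilde f := 2(f-\tfrac12)_+$.

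The final ingredient is the subadditivity lemma: for $g \geq 0$ and pairwise disjoint measurable sets $A_i$,
\[
\sum_i \|g\,\mathbf{1}_{A_i}\|_{L^{p,1}}^p \leq \|g\|_{L^{p,1}}^p.
\]
Using the representation $\|g\|_{L^{p,1}} \simeq \int_0^\infty |\{g>t\}|^{1/p}\,dt$, this follows from Minkowski's integral inequality applied to the $\ell^p$-norm of the sequence $i \mapsto \int_0^\infty |A_i \cap \{g>t\}|^{1/p}\, dt$ together with the pointwise bound $\sum_i |A_i \cap \{g>t\}| \leq |\{g>t\}|$. Summing the per-ball estimate over the disjoint family $\{B_i\}$,
\[
\sum_i (5 r_i)^{n-p} \leq 5^{n-p} C \sum_i \|Df \cdot \mathbf{1}_{B_i}\|_{L^{p,1}}^p \leq 5^{n-p} C\,\|Df\|_{L^{p,1}}^p < C'\varepsilon,
\]
and since $5 r_i \leq C\varepsilon^{1/n} \to 0$, this exhibits an admissible $\delta$-cover of $E$ (with $\delta = C\varepsilon^{1/n}$) whose total $(n-p)$-content is at most $C'\varepsilon$. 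Letting $\varepsilon \to 0$ yields $\H^{n-p}(E) = 0$.
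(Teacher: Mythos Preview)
Your subadditivity lemma for $\|\cdot\|_{L^{p,1}}^p$ over disjoint sets is correct and the Vitali-covering strategy is a natural one, but the per-ball estimate
\[
r_i^{n-p}\le C\,\|Df\|_{L^{p,1}(B_i)}^p
\]
is \emph{false} for $p<n$, and this is exactly the obstacle you flagged but did not overcome. A concrete counterexample: take $f=1$ on $B(0,\delta)$, let $f$ drop linearly to $\tfrac12$ on the annulus $B(0,2\delta)\setminus B(0,\delta)$, and set $f\equiv\tfrac12$ on $B(0,R)\setminus B(0,2\delta)$ for $R\gg\delta$ (and let $f$ descend to $0$ only outside $B(0,R)$). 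Then $\oint_{B(0,r)}f>\tfrac12$ for all $r\le R$ and the crossing radius $r_x$ lies just beyond $R$, while $\|Df\|_{L^{p,1}(B(0,r_x))}^p\approx\delta^{\,n-p}$. Thus $r_x^{\,n-p}\approx R^{\,n-p}$ can be made arbitrarily large compared with $\|Df\|_{L^{p,1}(B(0,r_x))}^p$. Your Poincar\'e route fails here because $\oint_{B_i}|f-f_{B_i}|$ is of order $(\delta/r_i)^n$, not of order $1$; coarea/relative isoperimetric gives only $\int_{B_i}|Df|\gtrsim\delta^{\,n-1}$, not $r_i^{\,n-1}$; and the truncation $\tilde f=2(f-\tfrac12)_+$ together with the density definition of $r_x$ is defeated by the variant where $f=\tfrac12+\epsilon$ on a region of density $\tfrac12$ in $B_i$ and $f=\tfrac12-\epsilon$ elsewhere (with $f=1$ only on a tiny $G$), sending $\epsilon\to 0$. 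The underlying reason is that for $p<n$ a single point has zero $\gamma_{p,1}$-capacity, so one test function cannot be forced to pay $r^{\,n-p}$ just for being $1$ near one point of $E$.

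This gap is genuine and not merely technical: a single competitor $f$ does not carry enough information, and the argument must somehow use the full null-capacity hypothesis (a \emph{sequence} of competitors with norms tending to zero). The paper does this in a completely different way: it builds $F=\sum_k(|f_k|+|Df_k|)\in L^{p,1}$ from a nested sequence of competitors, invokes a result of Mal\'y--Swanson--Ziemer to produce a Young function $\Phi$ tailored to $F$ with $\int_0^\infty(\Phi')^{-1/(p-1)}=1$, and then applies their Orlicz Hausdorff-content estimate to the tails $g_k=\sum_{j\ge k}f_j$, which are precisely represented and identically $+\infty$ on $E$. No covering argument or per-ball estimate is used.
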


The key to our simplification is to take a detour through Orlicz-Sobolev spaces, utilizing results of Mal\'y, Swanson and Ziemer in \cite{MSZ}. We hope that this more elementary approach will help make the result more accessible and better known, as well as suggesting other possible paths for further inquest. Further motivation for this paper comes from a forthcoming work on the size of graphs of maps in $W^{1,p}_{\loc}(\rn,\er^m)$; see \cite{CMRS}.

Furthermore, we show by example that such a claim does not hold for any $q>1$ (see Theorem~\ref{OneOverK}) and that there exists a set $E$ with $\gamma_{p,1}(E) = 0$ and $\dim_{\H}(E) = n-p$ (see Theorem~\ref{OneOverKSq}).

\section{Preliminaries}
Let $f$ be a measurable function. We define $\mu_{f},$ the \textit{distribution function}
of $f$ as follows (see \cite[Definition II.1.1]{BS}):
$$
	\mu_{f}(t)=\mathcal{L}^n(\{x \in \Omega: |f(x)| > t \}), \qquad t \geq 0.
$$
As can be found in \cite[Definition II.1.5]{BS}, we define $f^{*},$  the \emph{decreasing rearrangement} of $f$ by
$$
	f^{*}(t)=\inf\{v: \mu_{f}(v) \le t \}, \quad t \ge 0.
$$
\begin{definition}\label{PanLorenc}
	Let $n\in \en$, $1<p<\infty$,  $1\leq q\leq \infty$ and let $\Omega\subset \rn$ be measurable. By the $p,q$-Lorentz space on $\Omega$, $L^{p,q}(\Omega)$, we denote the collection of elements of $L^{1}_{\loc}(\Omega)$, such that
	$$
		\|f\|_{L^{p,q}(\Omega)}:=\left\{ \begin{array}{lc}
			\left( \displaystyle{\int_{0}^{\infty} (t^{1/p}f^{*}(t))^q \, \frac{dt}{t}}
			\right)^{1/q} & 1 \le q < \infty \\
			\\
			\sup\limits_{t>0} t \mu_{f}(t)^{1/p}=\sup\limits_{s>0}
			s^{1/p} f^{*}(s) & q=\infty
		\end{array}\right.
	$$
	is finite. We often simplify the notation $\|f\|_{p,q} := \|f\|_{L^{p,q}(\Omega)}$, where there is no danger of misinterpretation.
\end{definition}

\begin{remark}
	In the case that $q>p$ it is common to replace $f^*$ with $f^{**}$ in the above definition so that we have a norm, where
	$$
		f^{**}(t)=\mu_{f^{*}}(t)=\frac{1}{t} \int_{0}^{t} f^{*}(s) ds, \quad t >0.
	$$
	The two values are comparable and we do not need to do this in our paper. Moreover, we are especially interested in small values of $q$ where $\|\cdot\|_{L^{p,q}(\Omega)}$ is a norm as the other cases can be retrieved by an embedding argument. For the details we refer the reader to \cite{PKJF}, especially to Section~8.2.
	
	Further, it can be proven that
	$$
		\|f\|_{L^{p,1}(\Omega)}= \int_{0}^{\infty} t^{1/p}f^{*}(t) \, \frac{dt}{t}  = \int_{0}^{\infty} \big(\mu_f(s)\big)^{\frac{1}{p}}	\, ds.
	$$
	The proof is easy for simple functions and in general holds by approximation. We refer the reader to \cite{G} for details.
\end{remark}

\begin{definition}\label{SoboLore}
	Let $n\in \en$, $1<p<\infty$,  $1\leq q\leq \infty$ and let $\Omega\subset \rn$ be open. By the $p,q$-Sobolev-Lorentz space on $\Omega$, $W^1_{p,q}(\Omega)$, we denote the collection of elements of $W^{1,1}_{\loc}(\Omega)$, such that
	$$
		\|f\|_{1,p,q} :=\|f\|_{p,q} +\|Df\|_{p,q}<\infty.
	$$
\end{definition}

We take the following definition from \cite{PKJF} (especially see Definition~4.2.1, Definition~4.8.1 and  Theorem~4.8.5).
\begin{definition}
	A function $\Phi :[0,\infty) \to [0,\infty)$ is said to be a Young function if there exists a right-continuous, non-decreasing $\varphi:(0,\infty)\to (0,\infty)$ satisfying
	$$
	\lim_{t\to 0^+}\varphi(t)=  0, \quad \lim_{t\to \infty}\varphi(t)=  \infty
	$$
	such that 
	$$
		\Phi(t) = \int_0^t \varphi(s) \, ds.
	$$
	Let $\Omega\subset \rn$ be measurable. The Orlicz space $L^{\Phi}(\Omega)$ is defined as the space of all measurable functions $g$
	on $\Omega$ for which there exists a $\lambda >0$ such that
	$$
		\int_{\Omega} \Phi\Big(\frac{|g|}{\lambda}\Big)\, d\L^n \leq 1
	$$
	and $\|g\|_{L^{\Phi}(\Omega)}$ is the infimum of all such $\lambda$.
\end{definition}

The following is due to \cite[Lemma~9.3]{MSZ}.
\begin{thm}\label{MSZthm2}
	Let $p\in(1,\infty)$. Suppose that $f\in L^{p,1}(\Omega)$. Then there exists a Young function $\Phi$ satisfying
	$$
		\int_{0}^{\infty}(\Phi'(t))^{-\frac{1}{p-1}} \, dt =1
	$$
	and
	$$
		\int_{\Omega}\Phi(|f|)\, d\L^n \leq 1.
	$$
\end{thm}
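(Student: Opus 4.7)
The plan is to construct $\Phi$ by prescribing its derivative $\varphi=\Phi'$. By a layer-cake / Fubini computation
$$
\int_\Omega \Phi(|f|)\, d\L^n = \int_0^\infty \varphi(t)\,\mu_f(t)\, dt,
$$
so the task reduces to producing a right-continuous, non-decreasing $\varphi:(0,\infty)\to(0,\infty)$ with $\varphi(0^+)=0$ and $\varphi(\infty)=\infty$ satisfying $\int_0^\infty \varphi\mu_f\, dt \leq 1$ and $\int_0^\infty \varphi^{-1/(p-1)}\, dt = 1$.

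Using the identity $\|f\|_{p,1} = \int_0^\infty \mu_f(t)^{1/p}\, dt$ from the Remark, and applying H\"older's inequality with exponents $p$ and $p/(p-1)$ to the factorization $\mu_f^{1/p}=(\varphi\mu_f)^{1/p}\cdot\varphi^{-1/p}$, one gets
$$
\|f\|_{p,1} \leq \Bigl(\int_0^\infty \varphi\mu_f\, dt\Bigr)^{1/p}\Bigl(\int_0^\infty \varphi^{-1/(p-1)}\, dt\Bigr)^{(p-1)/p}.
$$
Hence the two required bounds implicitly force $\|f\|_{p,1}\leq 1$; assuming this normalization (rescaling $f$ if necessary), the natural candidate that saturates H\"older is
$$
\varphi(t) := \|f\|_{p,1}^{p-1}\,\mu_f(t)^{-(p-1)/p},
$$
and a direct calculation shows $\int_0^\infty \varphi^{-1/(p-1)}\, dt = 1$ and $\int_0^\infty \varphi\mu_f\, dt = \|f\|_{p,1}^p \leq 1$, as required.

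It remains to verify that this $\varphi$ is the derivative of a bona fide Young function. Non-decreasing monotonicity and right-continuity are inherited from the corresponding properties of $\mu_f$, and $\mu_f(t)\to 0$ as $t\to\infty$ gives $\varphi(\infty)=\infty$. The main obstacle is the boundary condition $\varphi(0^+)=0$: this is automatic when $\L^n(\{|f|>0\})=\infty$, but fails whenever $f$ is supported on a set of finite measure, since then $\mu_f(0^+)<\infty$ forces $\varphi(0^+)>0$. The fix I would adopt is to modify $\varphi$ on a short interval $(0,\delta)$ by a power-type tail $\varphi_\delta(t):=\varphi(\delta)(t/\delta)^{a}$ with exponent $a\in(0,p-1)$; the modified $\varphi$ is non-decreasing on $(0,\infty)$, vanishes at $0^+$, keeps $\int_0^\delta \varphi^{-1/(p-1)}\,dt$ finite, and perturbs both integrals by only $O(\delta)$. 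A final tiny rescaling of the constant $\|f\|_{p,1}^{p-1}$ then restores $\int\varphi^{-1/(p-1)}\,dt=1$ exactly, while a slightly sharper initial rescaling of $f$ (ensuring $\|f\|_{p,1}<1$ strictly) gives the room needed to absorb the $O(\delta)$ perturbation in $\int\varphi\mu_f\,dt$.
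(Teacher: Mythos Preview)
The paper does not prove this statement itself; it is quoted from \cite[Lemma~9.3]{MSZ}, so there is no in-paper argument to compare against. Your construction via $\varphi(t)\propto\mu_f(t)^{-(p-1)/p}$, chosen to saturate the H\"older inequality between $\int\varphi\mu_f$ and $\int\varphi^{-1/(p-1)}$, is the natural approach.

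Your H\"older observation correctly identifies that the two conclusions force $\|f\|_{p,1}\le 1$; this normalization is indeed implicit in the statement (and note that the paper's own application in the proof of Theorem~\ref{main} only uses the weaker consequence $\int\Phi(F)<\infty$). Be aware, though, that ``rescaling $f$'' is not quite the right phrasing: replacing $f$ by $\lambda f$ and then undoing the scaling in $\Phi$ does not preserve the constraint $\int(\Phi')^{-1/(p-1)}=1$. What you really mean is simply to assume $\|f\|_{p,1}<1$, which your own H\"older argument shows is necessary.

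There is also a second endpoint issue, symmetric to the one you repaired, that you did not address. If $|f|$ is essentially bounded, say $\mu_f(t)=0$ for $t\ge M$, then your candidate $\varphi(t)=\|f\|_{p,1}^{p-1}\mu_f(t)^{-(p-1)/p}$ equals $+\infty$ on $[M,\infty)$ and is therefore not the derivative of a Young function $\Phi:[0,\infty)\to[0,\infty)$ in the sense of the paper's definition. The fix is the mirror image of your $(0,\delta)$ modification: replace $\varphi$ on $[M,\infty)$ (or on $(M-\epsilon,\infty)$ if $\mu_f(M^-)=0$ as well) by a finite, increasing tail growing fast enough that $\int_M^\infty\varphi^{-1/(p-1)}\,dt$ is as small as you like; since $\mu_f\equiv 0$ beyond $M$ this leaves $\int\varphi\mu_f$ essentially unchanged. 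Both $O(\delta)$ perturbations are then absorbed by the final multiplicative rescaling exactly as you describe, and the argument is complete.
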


The following is \cite[Theorem~8.13]{MSZ}.
\begin{thm}\label{MSZthm3}
	Suppose that $p > 1$ and $\Phi$ is a Young function satisfying
$$
\int_0^{\infty} \big(\Phi'(t)\big)^{-\frac{1}{p-1}} \, dt \leq 1.
$$
Let $E\subset\rn$. There is a $C>0$ depending only on $n$ and $p$ such that whenever $f \in W^{1,1}_{\loc}(\rn)$ is precisely represented and $f \geq 1$ on $E$ it holds that 
$$
\H^{n-p}(E) \leq C \int_{\rn} (\Phi(|f|) + \Phi(|Df|)) \, d\L^n.
$$
\end{thm}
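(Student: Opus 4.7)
The plan is to prove the stated Hausdorff measure estimate via a Vitali-type covering argument, with each selected ball carrying a localized Orlicz-Sobolev-Poincar\'e lower bound on its energy. Normalize so that $I := \int_{\rn} (\Phi(|f|) + \Phi(|Df|)) \, d\L^n$ is finite. Since $f$ is precisely represented and $f(x) \geq 1$ for every $x \in E$, the averages $f_{B(x,r)}$ converge to $f(x) \geq 1$ as $r \to 0$, while the integrability of $\Phi(|f|)$ and the superlinear growth of $\Phi$ force $f_{B(x,r)} \to 0$ as $r \to \infty$. Hence for each $x \in E$ there exists a maximal radius $r_x > 0$ with $f_{B(x, r_x)} = \tfrac12$.

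The heart of the proof is the local estimate
$$r_x^{n-p} \leq C \int_{B(x, r_x)} (\Phi(|f|) + \Phi(|Df|)) \, d\L^n. \qquad (\ast)$$
To establish $(\ast)$, telescope the jump $\tfrac12 \leq f(x) - f_{B(x, r_x)}$ along dyadic scales $\rho_k := r_x 2^{-k}$:
$$\tfrac12 \leq \sum_{k \geq 0} \big| f_{B(x, \rho_{k+1})} - f_{B(x, \rho_k)} \big| \leq C \sum_{k \geq 0} \rho_k \cdot |B(x, \rho_k)|^{-1} \int_{B(x, \rho_k)} |Df| \, d\L^n,$$
where the second inequality is a classical Poincar\'e step. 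Control each gradient average by the generalized H\"older inequality for the Young pair $(\Phi, \Psi)$ complementary to $\Phi$, producing a factor $\|1_{B(x,\rho_k)}\|_{\Psi} = 1/\Psi^{-1}(1/|B(x,\rho_k)|)$ times the $\Phi$-Luxemburg norm of $|Df|$ on the ball. The hypothesis $\int_0^{\infty} (\Phi'(t))^{-1/(p-1)} dt \leq 1$, rewritten via the change of variable $t = \Psi'(s)$, is precisely the growth condition on $\Psi$ that renders the resulting series over $k$ summable and that, after rearrangement, transforms the bound into $(\ast)$ at the critical Hausdorff exponent $n-p$. This balancing of Orlicz moduli against dyadic scaling is the main technical obstacle and the reason that the specific integrability condition on $\Phi'$ appears.

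Given $(\ast)$, apply the $5r$-covering lemma to the family $\{B(x, r_x)\}_{x \in E}$ to extract a countable disjoint subfamily $\{B(x_i, r_{x_i})\}_i$ with $E \subset \bigcup_i B(x_i, 5 r_{x_i})$. Summing $(\ast)$ over $i$ and using disjointness,
$$\sum_i (5 r_{x_i})^{n-p} \leq C \sum_i \int_{B(x_i, r_{x_i})} (\Phi(|f|) + \Phi(|Df|)) \, d\L^n \leq C I,$$
which bounds the $(n-p)$-dimensional Hausdorff content of $E$. To pass from content to the full Hausdorff measure $\H^{n-p}(E)$, note that $(\ast)$ itself forces $r_x \to 0$ on regions where the local energy integral vanishes; combining this with absolute continuity of $\int_{\,\cdot\,} (\Phi(|f|) + \Phi(|Df|)) \, d\L^n$ allows one to refine the cover so that all radii $r_{x_i}$ become arbitrarily small outside a negligible set. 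Standard subadditivity then yields $\H^{n-p}(E) \leq C I$, completing the proof.
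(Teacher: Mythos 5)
You should first note that the paper does not prove this statement at all --- it is quoted verbatim as \cite[Theorem~8.13]{MSZ} --- so your attempt is measured against that source rather than against anything in this note. The decisive gap in your write-up is at its centre: the local estimate $(\ast)$ is asserted, not proved. The chain ``telescoping Poincar\'e plus Orlicz--H\"older with the complementary function $\Psi$'' only yields $\tfrac12 \le C\,\|Df\|_{L^{\Phi}(B(x,r_x))}\sum_k \rho_k\,\Psi^{-1}\big(1/|B(x,\rho_k)|\big)$, i.e.\ a lower bound on a Luxemburg norm. There is no homogeneity in the Orlicz setting that converts a lower bound on $\|Df\|_{L^{\Phi}(B)}$ into the modular lower bound $\int_B \Phi(|Df|)\,d\L^n \gtrsim r^{n-p}$, and the summability of $\sum_k \rho_k\,\Psi^{-1}(1/|B_k|)$ together with its comparison to the critical power of $r$ is exactly the place where the hypothesis $\int_0^\infty (\Phi'(t))^{-1/(p-1)}\,dt\le 1$ must be exploited quantitatively (for $\Phi(t)=t^p$, which barely fails the hypothesis, the series diverges unless $p<n/(n-1)$). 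Saying that the hypothesis ``after rearrangement transforms the bound into $(\ast)$'' is not an argument; it is the whole difficulty, and in \cite{MSZ} this step is carried out by a substantially more careful analysis that does not factor through the Luxemburg norm in this way.

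The second gap is the passage from content to measure. Granting $(\ast)$, your $5r$-covering argument bounds only the Hausdorff content $\H^{n-p}_{\infty}(E)$, because the radii $r_x$ are dictated by $f$ (they are the scales at which the average drops to $\tfrac12$) and cannot be made small: if $f\equiv 1$ on a neighbourhood of a positive-$\L^n$-measure portion of $E$, then $r_x$ is bounded below there, no fine cover is produced, and in that situation $\H^{n-p}(E)=\infty$ while $\int_{\rn}(\Phi(|f|)+\Phi(|Df|))\,d\L^n$ is finite --- so no covering argument based on a single fixed admissible $f$ and uncontrolled radii can deliver the full-measure inequality. Consequently the sentence about ``refining the cover so that all radii become arbitrarily small outside a negligible set'' cannot be repaired as written. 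It is worth observing that for the way the theorem is actually used in this paper (the right-hand side tends to $0$ along the functions $g_k$, and one concludes $\H^{n-p}(E)=0$), a Hausdorff-content bound would suffice, since vanishing content is equivalent to vanishing measure; but as a proof of the stated inequality for $\H^{n-p}$ your proposal is incomplete at both of these points.
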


\section{Proof of Theorem~\ref{main}}

Before we start the proof of Theorem~\ref{main} itself, let us prove the following preparatory lemma. The lemma also holds for the case $p=n$ with a slightly altered proof.
\begin{lemma}\label{KapNulovost}
	Let $1<p<n$, let $G\subset \rn$ be open. The following are equivalent
	$$
	\begin{aligned}
		\text{a) }&\gamma^+_{p,1}(E) = 0\\
		\text{b) }&\gamma^+_{p,1}(E,G) = 0\\
		\text{c) }&\gamma_{p,1}(E) = 0\\
		\text{d) }&\gamma_{p,1}(E,G) = 0.
	\end{aligned}
	$$
\end{lemma}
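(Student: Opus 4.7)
Four of the six implications are immediate from the definitions: widening the class of admissible functions by dropping the constraint $f=0$ on $\rn\setminus G$ can only lower the infimum, and enlarging the functional by adding $\|f\|_{L^{p,1}}^p$ can only raise it. This at once gives (b) $\Rightarrow$ (a), (b) $\Rightarrow$ (d), (a) $\Rightarrow$ (c), and (d) $\Rightarrow$ (c). So the whole lemma reduces to proving (c) $\Rightarrow$ (b).

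The core of the argument is the compact case. Assume $E\subset G$ is compact with $\gamma_{p,1}(E)=0$; for $\epsilon>0$ choose $f\in W^1_{p,1}(\rn)$ with $\|Df\|_{L^{p,1}}^p<\epsilon$ and $f\geq 1$ on an open $G'\supset E$, and truncate $\tilde f:=\min(f^+,1)$, so that $\tilde f\equiv 1$ on $G'$ and $\|D\tilde f\|_{L^{p,1}}\leq\|Df\|_{L^{p,1}}$. For $1<p<n$ the Sobolev--Lorentz embedding $\|\tilde f\|_{L^{p^*,1}}\leq C\|D\tilde f\|_{L^{p,1}}$ (with $1/p^*=1/p-1/n$) combined with Chebyshev shows that $A:=\{\tilde f>\tfrac 12\}$ has measure at most $C'\epsilon^{p^*/p}$. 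Passing to $g:=\max(2\tilde f-1,0)\leq \chi_A$, the new function still satisfies $g\equiv 1$ on $G'$, has $|Dg|\leq 2|D\tilde f|$, and the bound $g\leq\chi_A$ gives $\|g\|_{L^{p,1}}\leq p|A|^{1/p}$, which also tends to $0$ with $\epsilon$. A fixed cutoff $\eta\in C_c^{\infty}(G)$ with $\eta\equiv 1$ on an open neighbourhood of the compact set $E$ then turns $g$ into an admissible function $\eta g$ for (b); the Leibniz rule $\|D(\eta g)\|_{L^{p,1}}\leq \|Dg\|_{L^{p,1}}+\|D\eta\|_{\infty}\|g\|_{L^{p,1}}$ retains smallness only because $\|g\|_{L^{p,1}}$ is small. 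Letting $\epsilon\to 0$ yields $\gamma^+_{p,1}(E,G)=0$.

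To handle a general $E\subset G$ I decompose $E=\bigcup_{k\geq 1}E_k$ with $E_k:=E\cap\overline{B(0,k)}$ compact, apply the compact case to each, and combine by countable subadditivity: given $\epsilon>0$, for each $k$ pick a nonnegative admissible function $f_k$ for $(E_k,G)$ with $\|f_k\|_{L^{p,1}}+\|Df_k\|_{L^{p,1}}<\epsilon\,2^{-k}$, and set $f:=\sum_k f_k$. Summability of the $L^{p,1}$ norms makes $f$ well-defined in $W^1_{p,1}(\rn)$; by non-negativity $f\geq f_k\geq 1$ on the open set $\bigcup_k G'_k\supset E$, and $\spt f\subset G$, so $f$ is admissible for (b) with $\|f\|_{L^{p,1}}^p+\|Df\|_{L^{p,1}}^p\leq 2\epsilon^p$. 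The step I expect to be the true obstacle is the Leibniz cutoff: producing a function whose $L^{p,1}$ norm, not merely its $L^{p,1}$-gradient norm, is small enough for the cutoff multiplication to preserve the gradient bound — and this is exactly why the Sobolev--Lorentz embedding (rather than the plain Sobolev inequality) is invoked.
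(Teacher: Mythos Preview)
Your argument follows the same skeleton as the paper's: the trivial implications reduce everything to (c) $\Rightarrow$ (b), which is proved first for $E\Subset G$ via a fixed cutoff and then extended by countable subadditivity. The one genuine technical difference is how the $L^{p,1}$ norm of the test function is controlled so that the Leibniz term $\|D\eta\|_\infty\|\cdot\|_{p,1}$ stays small. The paper first arranges (by a preliminary decomposition) that $G$ has finite measure and then uses the finite-measure embedding $L^{p^*}(G)\hookrightarrow L^{p,1}(G)$ together with the ordinary Sobolev inequality $\|v\|_{p^*}\le C\|Dv\|_p\le C\|Dv\|_{p,1}$ to bound $\|\chi_G v\|_{p,1}$ directly. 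You instead truncate twice so that the resulting $g$ is supported on a set $A$ of small measure and use $\|g\|_{L^{p,1}}\le p|A|^{1/p}$. Both devices work; yours avoids the finite-measure assumption on $G$ at the cost of an extra truncation. Your closing editorial remark is slightly off, though: the plain Sobolev inequality already controls $|A|$ via Chebyshev, so the full Lorentz embedding $\|\tilde f\|_{p^*,1}\le C\|D\tilde f\|_{p,1}$ is not actually needed.

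One small slip in the reduction to the compactly-contained case: $E_k:=E\cap\overline{B(0,k)}$ is not compact unless $E$ is closed, and more importantly need not satisfy $\overline{E_k}\subset G$ when $E$ approaches $\partial G$. The fix is to exhaust $G$ by compact sets $K_k$ and set $E_k:=E\cap K_k$, noting that your ``compact case'' argument only uses $E\Subset G$ (existence of a fixed cutoff), not compactness of $E$ itself; this is exactly how the paper handles it.
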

\begin{proof}
	It is obvious that
	$$
	\gamma_{p,1}(E) \leq \gamma_{p,1}^+(E)\leq \gamma_{p,1}^+(E,G)
	$$
	and
	$$
	\gamma_{p,1}(E)\leq \gamma_{p,1}(E,G)\leq \gamma_{p,1}^+(E,G).
	$$

	Without loss of generality we may assume that $E\neq \emptyset$. Let us assume that $\gamma_{p,1}(E)=0$. To begin with let us assume that $E\Subset G$. By the subadditivity of the capacity (for example see \cite[Theorem~5.1]{SC}) we may assume that $E\Subset G \cap  B(0,1)$ and it suffices to show the claim for $G$ of finite measure. We find an open set $V \Subset G$, and for each $\epsilon >0$, we find a $v\in W^{1,1}_{\loc}(\rn)$ such that $v\geq 1$ on an open set $U_{\epsilon}$, $E\Subset U_{\epsilon} \subset V \Subset G$ and 
	$$
	\|Dv\|_{L^{p,1}(\rn)}^p\leq \epsilon.
	$$
	Further we find an $\eta\in \mathcal{C}^\infty_c(\rn)$ such that $\eta=1$ on $V$, $\eta(\rn)= [0,1]$,  $\eta = 0$ outside $G$ and $|\nabla\eta|\leq M$ for some $M>1$. We define $\tilde{v} = \eta v$, then $\tilde{v}$ is a contender in the infimum for $\gamma_{p,1}^+(E,G)$. Using the embedding of $L^{p,1}(G)$ into $L^{p}(G)$ and the embedding of $L^{\frac{np}{n-p}}(G)$ into $L^{p,1}(G)$ (see \cite[Chapter~8.2]{PKJF}) and the Sobolev inequality on $\rn$, we have
	$$
	\begin{aligned}
		\gamma_{p,1}^+(E,G) &\leq \|\tilde{v}\|^p_{p,1} + \|D\tilde{v}\|_{p,1}^p\\
		&\leq CM^p \|\chi_Gv\|_{p,1}^p + C\|Dv\|_{p,1}^p\\
		&\leq CM^p \|\chi_Gv\|_{\frac{np}{n-p}}^p + C\|Dv\|_{p,1}^p\\
		& \leq CM^p\|Dv\|_p^p + C \|Dv\|_{p,1}^p\\
		& \leq CM^p\|Dv\|_{p,1}^p\\
		&\leq CM^p \epsilon.
	\end{aligned}
	$$
	We send $\epsilon$ to zero prove that $\gamma_{p,1}^+(E,G) = 0$.
	
	To conclude the proof it suffices to write $E = \bigcup_{i=1}^{\infty}E_i$, where each $E_i\Subset G$ and use the subadditivity of the capacity.
\end{proof}

Now we may proceed to the proof of Theorem~\ref{main}.
\begin{proof}[Proof of Theorem~\ref{main}]
	The proof in the case $p=n$ can be found in \cite{SC}, therefore we assume that $p<n$.
	
	Let $E$ be a set such that $\gamma_{p,1}(E) = 0$. Then, by Lemma~\ref{KapNulovost}, it holds that $\gamma_{p,1}^+(E,G) = 0$ for any arbitrary open $G$ containing $E$. We define a sequence of functions $f_k \in W^{1,1}_{\loc}(\rn)$ inductively. We find $f_1 =1$ on $G_1 \supset E$ and $\|f_1\|_{p,1}^p + \|Df_1\|_{p,1}^p \leq 2^{-1}$. Having found $f_{k-1}=1$ on $G_{k-1}\supset E$ with $\|f_{k-1}\|_{p,1}^p + \|Df_{k-1}\|^p_{p,1}\leq 2^{1-k}$ we find an $f_{k}$ and a $E\subset G_k\Subset G_{k-1}$ and (since $\gamma_{p,1}(E,G_{k-1}) = 0$) we may assume that $\|f_k\|_{p,1}^p +\|Df_k\|_{p,1}^p\leq 2^{-k}$. By the absolute continuity of the norm on $L^{p,1}$ we have the density of smooth functions thanks to the standard convolution approximation arguments. Therefore we may assume that $f_k$ are smooth maps. We define $F:=\sum_{k=1}^{\infty}|f_k|+|Df_k|$. It holds that $F\in L^{p,1}(\rn)$.
	
	By Theorem~\ref{MSZthm2} we find a Young function $\Phi$ such that
	\begin{equation}\label{FinitenessOfExistence}
			\int_{0}^{\infty}\big[\Phi'(t)\big]^{-\frac{1}{p-1}} \, dt =1
	\end{equation}
	and
	$$
		\int_{\rn}\Phi(F)\, d\L^n <\infty.
	$$
	We define
	$$
		g_k(x) = \begin{cases}
			\sum_{j=k}^{\infty}f_j(x),\quad &x\in \rn\setminus E\\
			\infty &x\in E .
		\end{cases}
	$$
	Then each $g_k$ is smooth on each of the sets $\rn\setminus G_m$, $m\in \en$. Further, it can easily be observed that $\lim_{r\to 0}\oint_{B(x,r)}g_k = \infty$ for all $x\in E$. Thus we see that $g_k$ is its own precise representative. Since for almost every $x$ there is at most one $k$ such that $Df_k\neq 0$ we have that 
	$$
		Dg_k(x) = \sum_{j=k}^{\infty}Df_j(x).
	$$
	It follows from Theorem~\ref{MSZthm3} and the dominated convergence theorem that
	$$
		\H^{n-p}(E) =\lim_{k\to\infty}\int_{\rn}\Phi(|Dg_k|) + \Phi(g_k)=0.
	$$
\end{proof}

\section{Examples showing the sharpness of the result}
Throughout this chapter we denote cubes by the letter $Q$. Specifically, let $x\in \rn$ and $r>0$, we define the set
$$
	Q(x,r) := \prod_{i=1}^n(x_i-r, x_i+r).
$$
\begin{thm}\label{OneOverK}
	Let $n\in \en$, $1<p< n$ and $1<q\leq \infty$. There exists a set $E\subset \rn$ with $\H^{n-p}(E)>0$ and a sequence of continuous $f_j\in W^{1,1}_{\loc}(\er^n)$ with
	$$
		\|f_j\|_{L^{p,q}(\rn)} + \|Df_j\|_{L^{p,q}(\rn)} \to 0
	$$
	as $j\to \infty$ and $f_j \geq 1$ on open sets $G_j \supset E$.
\end{thm}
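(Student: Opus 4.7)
\emph{Proof plan.} I will take $E$ to be a self-similar Cantor-type set in $[0,1]^n$ of Hausdorff dimension $n-p$ that is $(n-p)$-Ahlfors regular (so in particular $\H^{n-p}(E)>0$). Such an $E$ satisfies, for some fixed $C_0,R>0$, the Minkowski-type bound
$$
\L^n(\{x\in\rn:\dist(x,E)<t\})\leq C_0\,t^p\qquad \text{for all } 0<t\leq R.
$$
A standard construction — at each stage, divide every surviving cube into $N^n$ congruent sub-cubes and retain $\sim N^{n-p}$ of them in a regular pattern with a fixed relative buffer — yields such a set. For each $j\in\en$ set
$$
f_j(x):=\psi_j(\dist(x,E)),\qquad \psi_j(s):=\min\bigl(1,\tfrac{1}{j}\max(0,\log(R/s))\bigr).
$$
Each $f_j$ is Lipschitz (hence continuous and in $W^{1,1}_{\loc}(\rn)$), equals $1$ on the open neighbourhood $G_j:=\{\dist(\cdot,E)<Re^{-j}\}\supset E$, is supported in $\{\dist(\cdot,E)<R\}$, and satisfies the pointwise bound $|\nabla f_j(x)|\leq (j\dist(x,E))^{-1}$ on the annulus $\{Re^{-j}<\dist(x,E)<R\}$ where it is non-constant.

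\emph{Norm estimates.} The inclusion $\{f_j>y\}\subset\{\dist(\cdot,E)<Re^{-yj}\}$ together with the Minkowski bound gives $\mu_{f_j}(y)\leq C_0R^p e^{-pyj}$, so $f_j^*(t)\leq j^{-1}\max(0,\log(C_0R^p/t))$. The substitution $u=\log(C_0R^p/t)$ in the defining integral of the Lorentz norm turns it into a scaled Gamma-type integral, yielding $\|f_j\|_{L^{p,q}}\leq CR/j$. Analogously the Minkowski bound yields $\mu_{|\nabla f_j|}(s)\leq C_0\min(R,(sj)^{-1})^p$, so that $|\nabla f_j|^*(t)\leq Cj^{-1}t^{-1/p}$ on $t\in[C_0R^pe^{-pj},\,C_0R^p]$, with the further bound $\|\nabla f_j\|_{\infty}\leq e^j/(jR)$ controlling the small tail $t\in(0,C_0R^pe^{-pj}]$. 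A direct split of the defining integral then gives
$$
\|\nabla f_j\|_{L^{p,q}}^q \leq \frac{C}{j^q}\int_{C_0R^pe^{-pj}}^{C_0R^p}\frac{dt}{t}+\frac{C}{j^q} \leq \frac{C(pj+1)}{j^q},
$$
so $\|\nabla f_j\|_{L^{p,q}}\leq Cj^{1/q-1}\to 0$ for every $q>1$; the case $q=\infty$ follows at once from $\sup_{s>0} s\,\mu_{|\nabla f_j|}(s)^{1/p}\leq C/j$.

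\emph{Where the sharpness comes from.} The whole phenomenon is driven by the fact that $|\nabla f_j|^*$ lies on the Marcinkiewicz borderline profile $t^{-1/p}$ across an interval of logarithmic length $\sim pj$; for the $L^{p,1}$ norm this log-weight integrates to a factor $\sim j$, which exactly cancels the $1/j$ prefactor — in perfect agreement with Theorem~\ref{main} — while for any $q>1$ the stronger prefactor $j^{-q}$ beats it. The only non-routine ingredient is arranging that $E$ is simultaneously $\H^{n-p}$-massive and Minkowski-regular, which is why one is forced to work with a carefully arranged self-similar Cantor set rather than an arbitrary set of positive $(n-p)$-dimensional Hausdorff measure.
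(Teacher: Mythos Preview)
Your argument is correct. Both you and the paper use the same self-similar Cantor set of dimension $n-p$ (the paper's set, with $2^n$ pieces at contraction ratio $2^{-n/(n-p)}$, is precisely an $(n-p)$-Ahlfors-regular set satisfying your Minkowski bound), but the test functions are built differently. The paper assembles $f_j$ piecewise on the annular shells $A_w$ of the Cantor hierarchy, assigning a height increment of order $1/k$ at generation $k$ for $j\le k\le J(j)$; this gives $t^{1/p}|Df_j|^*(t)\approx 1/k$ on the $k$-th dyadic block of $t$ and hence the discrete estimate $\|Df_j\|_{p,q}^q\lesssim\sum_{k\ge j}k^{-q}$. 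Your choice $f_j=\psi_j(\dist(\cdot,E))$ instead produces the flat Marcinkiewicz profile $t^{1/p}|\nabla f_j|^*(t)\approx 1/j$ over a $t$-interval of logarithmic length $\sim j$, and the estimate comes from a single integral. Your route is cleaner and more portable --- the only geometric input is the tubular-neighbourhood bound $\L^n(E_t)\le C_0t^p$, so the computation works verbatim for any compact set with that property --- while the paper's construction is fully explicit and needs no appeal to Minkowski content or Ahlfors regularity as outside facts. Both versions collapse at $q=1$ exactly as your final remark explains, in accordance with Theorem~\ref{main}.
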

\begin{proof}
	For a convenient shorthand we define
	$$
	\V_n := \{-1,1\}^n\quad \text{ and }  \quad  \beta:= \frac{p}{n-p}.
	$$
	Let $k\in \en$ and let $w=(w_1,w_2,\dots,w_k)\in (\V_n)^k$, we define
	\begin{equation}\label{Pretty1}
		c_w:=\sum_{i=1}^{k} 2^{\beta-i(\beta+1)}w_i
		\quad\text{and}\quad
		A_w:=Q\Big(c_w, 2^{\beta-k(\beta +1)}\Big)\setminus Q\Big(c_w, 2^{-k(\beta +1)}\Big).
	\end{equation}
	The reader may refer to Figure~\ref{Fig:CantorConstruction} for a depiction of the points $c_w$ and the sets $A_w$ in dimension one. This is a standard Cantor-type set. The set $A_w$ is a cubic annulus whose center is $c_w$. The union over $w\in \V^k$ of $A_w$ is often referred to as the `frame of generation $k$'.
	 
	 \definecolor{wewdxt}{rgb}{0.2,0.2,0.2}
	 \definecolor{lr}{rgb}{0.9,0.1,0.1}
	 \definecolor{rg}{rgb}{0.1,0.9,0.1}
	 \definecolor{mg}{rgb}{1,0.1,0.6}
	 \definecolor{bg}{rgb}{.1,0.6,1}
	 \definecolor{rvwvcq}{rgb}{0.08235294117647059,0.396078431372549,0.7529411764705882}
	 \begin{figure}
	 	\begin{tikzpicture}[line cap=round,line join=round,>=triangle 45,x=7cm,y=7cm]
	 		\clip(-1.2,-0.5) rectangle (1.2,0.2);
	 		\draw [line width=2pt,color=wewdxt] (-0.75,-0.2)-- (-0.25,-0.2);
	 		\draw [line width=2pt,color=wewdxt] (0.25,-0.2)-- (0.75,-0.2);
	 		\draw [line width=2pt,color=wewdxt] (-0.6875,-0.4)-- (-0.5625,-0.4);
	 		\draw [line width=2pt,color=wewdxt] (-0.4375,-0.4)-- (-0.3125,-0.4);
	 		\draw [line width=2pt,color=wewdxt] (0.3125,-0.4)-- (0.4375,-0.4);
	 		\draw [line width=2pt,color=wewdxt] (0.5625,-0.4)-- (0.6875,-0.4);
	 		\draw [line width=2pt,color=wewdxt] (-1,0)-- (1,0);
	 		\draw [line width=2pt,color=mg] (-1,-0.2)-- (-0.75,-0.2);
	 		\draw [line width=2pt,color=mg] (-0.25,-0.2)-- (-0,-0.2);
	 		\draw [line width=2pt,color=bg] (0.25,-0.2)-- (-0,-0.2);
	 		\draw [line width=2pt,color=bg] (0.75,-0.2)-- (1,-0.2);
	 		
	 		\draw [line width=2pt,color=mg] (-0.75,-0.4)-- (-0.6875,-0.4);
	 		\draw [line width=2pt,color=mg] (-0.5625,-0.4)-- (-0.5,-0.4);
	 		\draw [line width=2pt,color=bg] (-0.5,-0.4)-- (-0.4375,-0.4);
	 		\draw [line width=2pt,color=bg] (-0.3125,-0.4)-- (-0.25,-0.4);
	 		\draw [line width=2pt,color=bg] (0.75,-0.4)-- (0.6875,-0.4);
	 		\draw [line width=2pt,color=bg] (0.5625,-0.4)-- (0.5,-0.4);
	 		\draw [line width=2pt,color=mg] (0.5,-0.4)-- (0.4375,-0.4);
	 		\draw [line width=2pt,color=mg] (0.3125,-0.4)-- (0.25,-0.4);
	 		\begin{scriptsize}
	 			\draw [fill=rvwvcq] (0,0) circle (2.5pt);
	 			\draw[color=rvwvcq] (0.0,0.03) node {$c_0$};
	 			\draw [fill=lr] (-0.5,0) circle (2.5pt);
	 			\draw[color=lr] (-0.5,0.03) node {$c_{-1}$};
	 			\draw [fill=rg] (0.5,0) circle (2.5pt);
	 			\draw[color=rg] (0.5,0.03) node {$c_{1}$};
	 			\draw[color=wewdxt] (0.0,0.08) node {$Q(c_{0}, 2^{0})$};
	 			\draw[color=wewdxt] (-0.5,-0.12) node {$Q(c_{-1}, 2^{-1-1})$};
	 			\draw[color=wewdxt] (0.5,-0.12) node {$Q(c_{1}, 2^{-1-1})$};
	 			\draw[color=mg] (-0.85,-0.12) node {$A_{-1}$};
	 			\draw[color=mg] (-0.15,-0.12) node {$A_{-1}$};
	 			\draw[color=bg] (0.85,-0.12) node {$A_{1}$};
	 			\draw[color=bg] (0.15,-0.12) node {$A_{1}$};
	 			
	 			\draw[color=mg] (-0.625,-0.32) node {$A_{-1,-1}$};
	 			\draw[color=bg] (-0.375,-0.32) node {$A_{-1,1}$};
	 			\draw[color=bg] (0.625,-0.32) node {$A_{1,1}$};
	 			\draw[color=mg] (0.375,-0.32) node {$A_{1,-1}$};
	 			
	 			\draw [fill=lr] (-0.625,-0.2) circle (2.5pt);
	 			\draw[color=lr] (-0.625,-0.17) node {$c_{-1,-1}$};
	 			\draw [fill=rg] (-0.375,-0.2) circle (2.5pt);
	 			\draw[color=rg] (-0.375,-0.17) node {$c_{-1,1}$};
	 			\draw [fill=lr] (0.375,-0.2) circle (2.5pt);
	 			\draw[color=lr] (0.375,-0.17) node {$c_{1,-1}$};
	 			\draw [fill=rg] (0.625,-0.2) circle (2.5pt);
	 			\draw[color=rg] (0.625,-0.17) node {$c_{1,1}$};
	 			%\draw[color=wewdxt] (-0.6,-0.33) node {$Q(c_{-1,-1}, 2^{-2-2})$};
	 			%\draw[color=wewdxt] (-0.2,-0.33) node {$Q(c_{-1,1}, 2^{-2-2})$};
	 			%\draw[color=wewdxt] (0.4,-0.33) node {$Q(c_{1,-1}, 2^{-2-2})$};
	 			%\draw[color=wewdxt] (0.7,-0.33) node {$Q(c_{1,1}, 2^{-2-2})$};
	 			\draw [fill=lr] (-0.65625,-0.4) circle (2.5pt);
	 			\draw[color=lr] (-0.65625,-0.37) node {$c_{-1,-1,-1}$};
	 			\draw [fill=rg] (-0.59375,-0.4) circle (2.5pt);
	 			\draw[color=rg] (-0.54,-0.44) node {$c_{-1,-1,1}$};
	 			\draw [fill=lr] (-0.40625,-0.4) circle (2.5pt);
	 			\draw[color=lr] (-0.40625,-0.37) node {$c_{-1,1,-1}$};
	 			\draw [fill=rg] (-0.34375,-0.4) circle (2.5pt);
	 			\draw[color=rg] (-0.29,-0.44) node {$c_{-1,1,1}$};
	 			\draw [fill=lr] (0.34375,-0.4) circle (2.5pt);
	 			\draw[color=lr] (0.34375,-0.37) node {$c_{1,-1,-1}$};
	 			\draw [fill=rg] (0.40625,-0.4) circle (2.5pt);
	 			\draw[color=rg] (0.45,-0.44) node {$c_{1,-1,1}$};
	 			\draw [fill=lr] (0.59375,-0.4) circle (2.5pt);
	 			\draw[color=lr] (0.59375,-0.37) node {$c_{1,1,-1}$};
	 			\draw [fill=rg] (0.65625,-0.4) circle (2.5pt);
	 			\draw[color=rg] (0.7,-0.44) node {$c_{1,1,1}$};
	 		\end{scriptsize}
	 	\end{tikzpicture}
	 	\caption{The first three generations of the construction of a Cantor set in dimension 1 as the limit of finite sums $c_w$ and an illustration of the sets $A_w$ in generation one and two.}\label{Fig:CantorConstruction}
	 \end{figure}

	The set 
	$$
		E:=  \Big\{ \sum_{i=1}^{\infty} 2^{\beta-i(\beta+1)}w_i; w\in \V_n^{\en}\Big\} = \bigcap_{k=1}^{\infty}\bigcup_{w\in \V_n^k} \overline{Q\big(c_w, 2^{-k(\beta +1)}\big)}.
	$$
	is a compact Cantor-type set of positive finite $\H^{n-p}$ measure (for this standard computation see \cite{F}). In order to aid the comprehension of the reader, let us note that in the case that $n=1$ and $\beta= \frac{\ln2 - \ln 3}{\ln 2}$ we retrieve a version of the standard Cantor ternary set. Varying values of $\beta$ give a set with similar properties but of a differing Hausdorff dimension. The set for $n>1$ can be equated with the Cartesian products of $n$ copies of the one-dimensional case.
	
	It holds that
	$$
	Q(0,1) = \bigcup_{k=1}^{\infty}\bigcup_{w\in \V_n^k}A_w \cup E \cup N
	$$
	where the sets in the above union are all pairwise disjoint and the set $N$ has zero Lebesgue measure. That is to say almost every point of $Q(0,1)$ belongs to exactly one frame of generation $k$, or is a point of the Cantor set $E$.
	
	We define $|w|:= k$ if $w\in \V_n^{k}$, and $|w|=\infty$ if $w \in \V_n^{\en}$. For each $j\in \en$ we find the smallest $J = J(j)$ such that $C_j :=\sum_{k=j}^{J}k^{-1} \geq 1$. We define the function $f_j$ as follows
	$$
	f_j(x):=\begin{cases}
		0\quad &x\in A_w \text{ for }1\leq |w| \leq j-1\\
		C_j^{-1}\sum_{k=j}^{|w-1|}\frac{1}{k} + \frac{1}{|w|} \Big(\frac{2^{-(\beta+1)|w|+ \beta} - |x- c_w|_{\infty}}{2^{-(\beta+1)|w|+ \beta} - 2^{-(\beta + 1)|w|}}\Big) \quad & x\in A_w \text{ for }j\leq |w|\leq J\\
		1\quad& x\in \bigcup_{w\in \V_n^{J}}Q\big(c_w, 2^{-J(\beta +1)}\big).\\
	\end{cases}
	$$

	It is easy to verify that $f_j$ is Lipschitz continuous.  Specifically, we estimate the $|Df_j|$ on $\bigcup_{w\in \V^k}A_w$ as 
	$$
	|Df_j(x)|\leq \begin{cases}
		\frac{1}{k}\frac{1}{2^{-(\beta+1)k+ \beta} - 2^{-(\beta + 1)k}} \approx \frac{2^{k(\beta+1)}}{k} \qquad & x\in \bigcup_{w\in \V^k}A_w  \ \& \ j\leq k\leq J\\
		0 \qquad & \text{else.} \\
	\end{cases}
	$$
	By summing the measures of the set $A_w$ over $w\in \V^k$, we have the following measure estimates
	$$
	\L^n\Big(\bigcup_{w\in \V^k}A_w\Big) = 2^{nk}\big(2^{n\beta-nk(\beta +1)} - 2^{-nk(\beta +1)}\big) \approx 2^{-nk\beta}.
	$$
	Thus, we see that the measure of the set where $|Df_j| \geq C2^{k(\beta+1)}k^{-1}$ is bounded by $C2^{-nk\beta}$. Therefore, for $k\geq j$ and $C2^{-(k+1)n\beta}<t<C2^{-kn\beta}$ we have
	$$
		|Df|^*(t) \leq Ck^{-1}2^{k(\beta+1)}.
	$$
	Recalling the fact that $\beta := \frac{p}{n-p}$ we calculate
	$$
		\|Df_j\|_{p,q}^q = \int_{0}^{\infty}[t^{\frac{1}{p}}|Df_j|^*(t)]^q \frac{dt}{t} \leq C\sum_{k=j}^{\infty}\frac{1}{k^q}\xrightarrow{j\to\infty} 0
	$$
	for $1<q<\infty$. The calculation for $q=\infty$ is even simpler, i.e.
	$$
		\|Df_j\|_{L^{p,\infty}} \leq \frac{C}{j}.
	$$
	Since $f_j$ are uniformly bounded by 1 and supported on a set of measure at most $2^{-j\beta}$ we easily observe that $\|f_j\|_{L^{p,q}(\rn)} \to 0$ as $j\to \infty$. It follows that $\gamma_{p,q}(E) = 0$ by definition.
\end{proof}

\begin{thm}\label{OneOverKSq}
	Let $n\in \en$ and $1<p< n$. There exists a set $E\subset \rn$ with $\dim_{\H}(E)=n-p$ (but $\H^{n-p}(E) = 0$) and a sequence of continuous $f_j\in W^{1,1}_{\loc}(\er^n)$ with
	$$
	\|f_j\|_{L^{p,1}(\rn)} + \|Df_j\|_{L^{p,1}(\rn)} \to 0
	$$
	as $j\to \infty$ and $f_j \geq 1$ on open sets $G_j \supset E$.
\end{thm}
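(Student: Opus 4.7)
The plan is to refine the Cantor-type construction of Theorem~\ref{OneOverK} by multiplying the level-$k$ cube radii by an additional factor $1/\phi(k)$ that grows polynomially, for instance $\phi(k)=k+1$. This shrinking accomplishes two things at once: it drives $\H^{n-p}(E)$ to zero while leaving $\dim_\H E = n-p$, and it produces exactly the extra factor $\phi(j)^{(p-n)/p}\to 0$ that was missing at the endpoint $q=1$ in Theorem~\ref{OneOverK}. As a consequence, the simplest one-level cutoff already does the job, without any telescoping harmonic weights.

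Keeping $\beta = p/(n-p)$ and $\V_n = \{-1,1\}^n$, I would set
$$
r_k = \frac{2^{\beta - k(\beta+1)}}{\phi(k)}, \qquad r_k' = 2 r_{k+1}, \qquad c_w = \sum_{i=1}^k r_i w_i,
$$
and $A_w = Q(c_w,r_k)\setminus Q(c_w,r_k')$ for $w\in\V_n^k$, obtaining the Cantor set $E$. The choice $r_k' = 2 r_{k+1}$ ensures that the $2^n$ children at level $k+1$ exactly tile the inner cube of each parent at level $k$, and a direct check gives the annular width $r_k - r_k' \gtrsim 2^{-k(\beta+1)}/\phi(k)$. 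Using $(\beta+1)(n-p) = n$, the natural level-$k$ cover then yields
$$
\H^{n-p}(E) \leq C\, 2^{nk} r_k^{n-p} = C \phi(k)^{-(n-p)} \xrightarrow{k\to\infty} 0.
$$
For the reverse bound $\dim_\H E\geq n-p$, I would apply Frostman's lemma to the uniform measure $\mu$ that places mass $2^{-nk}$ on each level-$k$ cube: for $r\in [r_{k+1},r_k]$ a ball $B(x,r)$ meets at most $O(1)$ level-$k$ cubes (because the tight tiling allows overlap only along cube boundaries), hence $\mu(B(x,r)) \lesssim 2^{-nk} \lesssim r^{n-p}\phi(k)^{n-p}$; since $k\lesssim \log(1/r)$ and $\phi$ grows polynomially, for every $d<n-p$ this is $\lesssim r^d$ for small $r$, forcing $\H^d(E)>0$.

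For the functions I would take $f_j$ equal to $1$ on the open set $G_j := \bigcup_{w\in\V_n^j} Q(c_w, r_j') \supset E$, vanishing outside $\bigcup_w Q(c_w,r_j)$, and affine in $|x-c_w|_\infty$ on each annulus $A_w$. Then $|Df_j|\lesssim \phi(j)\,2^{j(\beta+1)}$ on a set of total measure $\lesssim 2^{-jn\beta}\phi(j)^{-n}$, and the cancellation $n\beta/p = \beta+1$ gives
$$
\|Df_j\|_{L^{p,1}} \lesssim \phi(j)\,2^{j(\beta+1)}\bigl(2^{-jn\beta}\phi(j)^{-n}\bigr)^{1/p} = \phi(j)^{(p-n)/p} \xrightarrow{j\to\infty} 0,
$$
with a similar, simpler bound $\|f_j\|_{L^{p,1}}\lesssim \bigl(2^{-jn\beta}\phi(j)^{-n}\bigr)^{1/p}\to 0$.

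The main obstacle is the dimension lower bound: although the Frostman estimate is routine in spirit, one must carefully track the $\phi$-correction at every scale and verify the geometric separation claim for level-$k$ cubes (which relies on the tight-fit identity $r_{k-1}' = 2 r_k$) so that $\mu(B(x,r))\lesssim 2^{-nk}$ holds uniformly for all $r\in[r_{k+1},r_k]$. Everything else amounts to the Theorem~\ref{OneOverK} bookkeeping decorated with the extra $\phi$-factor, which by design turns the divergent harmonic series appearing there into the convergent negative-power series above.
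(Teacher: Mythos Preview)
Your construction of the set $E$ is essentially the paper's: the paper also inserts a factor $1/k$ into the level-$k$ radii (so $\phi(k)\sim k$), obtains $\H^{n-p}(E)\le k^{p-n}\to 0$ from the natural cover, and appeals to the standard Falconer argument (equivalent to your Frostman mass-distribution estimate) for $\dim_\H E=n-p$.

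Where you genuinely diverge is in the test functions. The paper reuses the multi-level telescoping function from Theorem~\ref{OneOverK}, climbing from level $j$ to some $J(j)$ with harmonic weights $1/k$; after rearrangement this yields $\|Df_j\|_{p,1}\lesssim\sum_{k\ge j}k^{-n/p}$, a convergent tail because $n/p>1$. You instead observe that the extra $\phi$-factor by itself already supplies the missing decay, so a single-level cutoff at scale $j$ gives $\|Df_j\|_{p,1}\lesssim\phi(j)^{(p-n)/p}\to 0$ directly. Your route is shorter and makes transparent exactly which ingredient ($\phi$) is doing the work at the endpoint $q=1$; the paper's route has the advantage of exhibiting a single template function that handles all $q\ge 1$ uniformly across Theorems~\ref{OneOverK} and~\ref{OneOverKSq}. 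Both are correct.
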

\begin{proof}
	The construction is somewhat similar to that in Theorem~\ref{OneOverK}. We use the similar notation
	$$
		\V_n := \{-1,1\}^n, \quad  \beta:= \frac{p}{n-p},
	$$
	for $k\in \en$ and $w\in \V_n^k$ define
	$$
		c_w:=\sum_{i=1}^{k} \frac{2^{\beta-i(\beta+1)}}{i}w_i, \quad A_w:=Q\Big(c_w, \frac{2^{\beta-k(\beta +1)}}{(k-1)}\Big)\setminus Q\Big(c_w, \frac{2^{-k(\beta +1)}}{k}\Big),
	$$
	and
	$$
		 E:=  \Big\{ \sum_{i=1}^{\infty} \frac{2^{\beta-i(\beta+1)}}{i}w_i; w\in \V_n^{\en}\Big\} = \bigcap_{k=1}^{\infty}\bigcup_{w\in \V_n^k} \overline{Q\big(c_w, 2^{-k(\beta +1)k^{-1}}\big)} .
	$$
	The reader may refer to Figure~\ref{Fig:CantorConstruction} for a clarification of the points $c_w$ and the sets $A_w$ in the case $n=1$. Again, we denote by $J = J(j)$  the smallest number such that $C_j :=\sum_{k=j}^{J}k^{-1} \geq 1$. We define the function $f_j$ as follows
	$$
	f_j(x):=\begin{cases}
		0\quad &x\in A_w \text{ for }1\leq |w| \leq j-1\\
		C_j^{-1}\sum_{k=j}^{|w-1|}\frac{1}{k} + \frac{1}{|w|} \Big(\frac{|w|\cdot2^{-(\beta+1)|w|+ \beta} - (|w|-1)|x- c_w|_{\infty}}{|w|\cdot 2^{-(\beta+1)|w|+ \beta} - (|w|-1)2^{-(\beta + 1)|w|}}\Big) \quad & x\in A_w \text{ for }j\leq |w|\leq J\\
		1\quad& x\in \bigcup_{w\in \V_n^{J}}Q\Big(c_w, \frac{2^{-J(\beta +1)}}{J}\Big).\\
	\end{cases}
	$$

	Similarly to before, we have that $f_j$ are Lipschitz. Repeating the calculation from the previous example (using the definition of $\beta = \frac{p}{n-p}$) we see that
	$$
		|Df_j|^*(t) \approx 2^{k\frac{n}{n-p}} \quad \text{for } t\in \Big(C\frac{2^{-(k+1)\frac{np}{n-p}}}{(k+1)^n}, C\frac{2^{-k\frac{np}{n-p}}}{k^n}\Big)
	$$
	and therefore, for $p<n$,
	$$
	\|Df_j\|_{p,1} = \int_{0}^{\infty}t^{\frac{1}{p}}|Df|^*(t) \frac{dt}{t} \leq C\sum_{k=j}^{\infty}\frac{1}{k^{\frac{n}{p}}} \xrightarrow{j\to\infty} 0.
	$$
	
	By covering $E$ with the sets $\bigcup_{w\in \V_n^{k}}Q\Big(c_w, \frac{2^{-k(\beta +1)}}{k}\Big)$, we estimate
	$$
	\H^{n-p}(E) \leq 2^{nk}2^{-k(\beta+1)(n-p)}k^{p-n} = k^{p-n} \to 0
	$$
	as $k\to \infty$. For $d<n-p$ we use the standard argument of the optimality of our covering (see \cite{F}) and the same coverings give
	$$
	\H^{d}_{2^{-k(\beta+1)}k^{-1}}(E) \geq C2^{nk}2^{-k(\beta+1)d}k^{-d} = C2^{kn\frac{n-p-d}{n-p}}k^{-d} \to \infty
	$$
	as $k\to \infty$, proving that $\dim_{\H}E = n-p$.
\end{proof}


\begin{thebibliography}{00}
\bibitem{BS}
\by{Bennett, C., Sharpley, R.}
\book{Interpolation of Operators}
\publ{Academic Press}
\yr{1988}
\endbook


\bibitem{CMRS}
\by{Campbell, D., Mal{\'y}, J., Soudský, F. and Roskovec T.}
\paper{On the Hausdorff dimension of graphs of Sobolev mappings}
\jour{in preparation}
\endprep



\bibitem{CP}
\by{Cianchi, A., Pick, L.}
\paper{An optimal endpoint trace embedding}
\jour{Annales de l'Institut Fourier}
\vol{60}
\pages{939--951}
\yr{2010}
\endpaper



\bibitem{SC}
\by{Costea, S.}
\paper{Sobolev-Lorentz capacity and its regularity in the Euclidean setting}
\jour{Ann. Ac. Sci. Fenn. Math.}
\vol{44}
\pages{537--568}
\yr{2019}
\endpaper



\bibitem{F}
\by{Falconer, K. J.}
\book{The Geometry of Fractal Sets}
\publ{Cambridge University Press}
\yr{1985}
\endbook




\bibitem{GS}
\by{Galambos, J. and Seneta, E.}
\paper{Regularly Varying Sequences}
\jour{Proc. Amer. Math. Soc.}
\vol{41 \text{no.} 1}
\pages{110--116}
\yr{1973}
\endpaper


\bibitem{G}
\by{Grafakos, L.}
\book{Classical Fourier Analysis}
\publ{Springer}
\yr{2014}
\endbook



\bibitem{KKM}
\by{Kauhanen, J., Koskela, P. and Malý, J.}
\paper{On functions with derivatives in a Lorentz space}
\jour{Manuscripta Math.}
\vol{100}
\pages{87--101}
\yr{1999}
\endpaper


\bibitem{KKK}
\by{Korobkov, M. V. and Kristensen, J.}
\paper{The Trace Theorem, the Luzin N- and Morse–Sard Properties for the Sharp Case of Sobolev–Lorentz Mappings}
\jour{ J Geom Anal}
\vol{28}
\pages{2834--2856}
\yr{2018}
\endpaper

\bibitem{KaK}
\by{\name{Malý}{J.}, \name{Korobkov}{M. V.} and \name{Kristensen}{J.}}
\paper{A comparison of capacities}
\jour{in preparation}
\endprep


\bibitem{MM}
\by{Mal{\'y}, J. and Martio, O.}
\paper{Lusin's condition (N) and mappings of the class $W^{1,n}$}
\jour{J für die reine und angewandte Math}
\vol{458}
\pages{19--36}
\yr{1995}
\endpaper

\bibitem{MSZ}
\by{Mal{\'y}, J., Swanson, D. and Ziemer, W. P.}
\paper{Fine behavior of functions whose gradients are in an Orlicz space}
\jour{Studia Mat}
\vol{190}
\pages{33--71}
\yr{2009}
\endpaper

\bibitem{MZ}
\by{Mal{\'y}, J. and Ziemer, W. P.}
\book{Fine Regularity of Solutions of Elliptic Partial Differential Equations}
\publ{AMS}
\yr{1997}
\endbook

\bibitem{MPS}
\by{Mihula, Z., Pick, L. and Spector, D.}
\paper{Potential trace inequalities via a Calderón-type theorem}
\jour{arxiv.org/abs/2407.03986}
\yr{2024}
\endprep

\bibitem{PKJF}
\by{Pick, L., Kufner, A., John, O., Fu\v c\' ik, S.}
\book{Function Spaces}
\publ{De Gruyter}
\yr{2012}
\endbook


\bibitem{S}
\by{Stein E. M.}
\paper{Editor's Note: The Differentiability of Functions in $\rn$}
\jour{Annals of Mathematics}
\vol{113, no. 2}
\pages{383 -- 385}
\yr{1981}
\endpaper

\end{thebibliography}
\end{document}